\numberwithin{equation}{section}
\numberwithin{figure}{section}
\theoremstyle{plain}
\newtheorem{thm}{\protect\theoremname}
  \theoremstyle{plain}
  \newtheorem{conjecture}[thm]{\protect\conjecturename}
  \theoremstyle{plain}
  \newtheorem{lem}[thm]{\protect\lemmaname}
  \theoremstyle{remark}
  \newtheorem{rem}[thm]{\protect\remarkname}
  \providecommand{\conjecturename}{Conjecture}
  \providecommand{\lemmaname}{Lemma}
  \providecommand{\remarkname}{Remark}
\providecommand{\theoremname}{Theorem}
\begin{document}

\title{Upper Bounds for Sunflower-Free sets}

\author{Eric Naslund, William F. Sawin}
\begin{abstract}
A collection of $k$ sets is said to form a \emph{$k$-sunflower}, or $\Delta$\emph{-system} if the intersection of any two sets
from the collection is the same, and we call a family of sets $\mathcal{F}$
\emph{sunflower-free }if it contains no sunflowers. Following the
recent breakthrough of Ellenberg and Gijswijt and Croot, Lev and Pach
\cite{EllenbergGijswijtCapsets,CrootLevPachZ4} we apply the polynomial
method directly to Erd\H{o}s-Szemer\'{e}di sunflower problem \cite{ErdosSzemeredi1978UpperBoundsForSunflowerFreeSets}
and prove that any sunflower-free family $\mathcal{F}$ of subsets
of $\{1,2,\dots,n\}$ has size at most 
\[
|\mathcal{F}|\leq3n\sum_{k\leq n/3}\binom{n}{k}\leq\left(\frac{3}{2^{2/3}}\right)^{n(1+o(1))}.
\]
We say that a set $A\subset(\mathbb Z/D \mathbb Z)^{n}=\{1,2,\dots,D\}^{n}$
for $D>2$ is sunflower-free if every distinct triple $x,y,z\in A$
there exists a coordinate $i$ where exactly two of $x_{i},y_{i},z_{i}$
are equal. Using a version of the polynomial method with characters
$\chi:\mathbb{Z}/D\mathbb{Z}\rightarrow\mathbb{C}$ instead of polynomials, we show that
any sunflower-free set $A\subset(\mathbb Z/D \mathbb Z)^{n}$ has size 
\[
|A|\leq c_{D}^{n}
\]
where $c_{D}=\frac{3}{2^{2/3}}(D-1)^{2/3}$. This can be seen as making
further progress on a possible approach to proving the Erd\H{o}s-Rado
sunflower conjecture \cite{ErdosRadoTheorem}, which by the work of
Alon, Sphilka and Umans \cite[Theorem 2.6]{AlonSphilkaUmansSunflowerMatrix} is
equivalent to proving that $c_{D}\leq C$ for some constant $C$ independent
of $D$.
\end{abstract}

\date{May 27th 2016}

\email{naslund@princeton.edu, wsawin@math.princeton.edu}

\maketitle

\section{Introduction}

A collection of $k$ sets is said to form a \emph{$k$-sunflower}, or $\Delta$\emph{-system}, if the intersection of any two sets
from the collection is the same. A family of sets $\mathcal{F}$ is
said to be \emph{$k$-sunflower free} if no $k$ members form a $k$-sunflower,
and when $k=3$ we simply say that the collection $\mathcal{F}$ is
\emph{sunflower-free.} It is a longstanding conjecture that sunflower-free
families must be small, and there are two natural situations in which
we may ask this question. The first, and most general case, is when
each set in the family has size $m$. Erd\H{o}s and Rado made the following
conjecture which is now known as the \emph{Sunflower Conjecture.}
\begin{conjecture}
\label{conj:Erdos-Rado-Sunflower-Conjecture}(Erd\H{o}s-Rado Sunflower
Conjecture \cite{ErdosRadoTheorem}) Let $k\geq3$, and suppose that
$\mathcal{F}$ is a $k$-sunflower free family of sets, each of size
$m$. Then 
\[
|\mathcal{F}|\leq C_{k}^{m}
\]
for a constant $C_{k}>0$ depending only on $k$.
\end{conjecture}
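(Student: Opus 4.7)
The plan is to attack the conjecture by combining the reduction of Alon, Shpilka, and Umans cited in the abstract with the character-based polynomial method announced in the paper, and then to work out what extra input would be needed to close the remaining gap.

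First I would invoke the Alon--Shpilka--Umans equivalence to translate the combinatorial statement into a question about structured subsets of a product alphabet: it suffices to prove that every sunflower-free set $A \subset (\mathbb{Z}/D\mathbb{Z})^{n}$ satisfies $|A| \leq C^{n}$ for some absolute constant $C$ independent of $D$. This has the advantage of replacing arbitrary set systems by a cap-set-style problem, to which the slice-rank machinery of Croot--Lev--Pach and Ellenberg--Gijswijt is suited.

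Next, I would set up the character-based polynomial method. Using the characters $\chi:\mathbb{Z}/D\mathbb{Z} \to \mathbb{C}$, I would write the indicator of the event ``in some coordinate $i$, exactly two of $x_i,y_i,z_i$ coincide'' as a finite sum of products $\chi_1(x_i)\chi_2(y_i)\chi_3(z_i)$ over the three inputs and the $n$ coordinates. Summing over $i$ gives a trilinear form $T(x,y,z)$ on $\mathbb{C}^A$ that is supported away from the diagonal $x=y=z$ but vanishes on all distinct triples by the sunflower-free hypothesis. Expanding $T$ as a sum of rank-one tensors in each argument and bounding the slice rank would yield a bound of the form $|A| \leq c_D^{\,n}$, matching the value $c_D = \tfrac{3}{2^{2/3}}(D-1)^{2/3}$ stated in the abstract.

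The main obstacle, and the place where this plan does not automatically succeed, is the $D$-dependence of $c_D$. The naive character expansion produces a number of surviving rank-one pieces per coordinate that grows with $|\mathbb{Z}/D\mathbb{Z}|$, and the Ellenberg--Gijswijt-style optimization over multi-degrees recovers only $c_D = O(D^{2/3})$. To reach a $D$-independent constant one would need either a genuinely different decomposition of the ``exactly two equal'' tensor (for instance, splitting $\mathbb{Z}/D\mathbb{Z}$ into a bounded number of cosets of an auxiliary subgroup and iterating in $\log D$ rounds), or a refined slice-rank argument exploiting that sunflower-free sets cannot concentrate on any low-dimensional substructure. Supplying either of these ingredients is the barrier the current paper does not cross, and it is precisely where any complete proof of Conjecture~\ref{conj:Erdos-Rado-Sunflower-Conjecture} along these lines must do new work.
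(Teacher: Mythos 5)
The statement you are asked to prove is the Erd\H{o}s--Rado Sunflower Conjecture itself, which is an open problem; the paper does not prove it and explicitly says the conjectured bound ``remains out of reach for any $k\geq 3$.'' Your proposal, to its credit, is honest about this: it reproduces the paper's actual strategy (the Alon--Shpilka--Umans reduction to sunflower-free sets in $(\mathbb Z/D\mathbb Z)^n$, followed by the character-based slice-rank bound giving $c_D = \tfrac{3}{2^{2/3}}(D-1)^{2/3}$) and correctly identifies that the fatal gap is the growth of $c_D$ with $D$. Since the equivalence requires a constant $C$ independent of $D$, and every known decomposition of the ``exactly two equal'' tensor produces on the order of $D$ nontrivial characters per coordinate, the bound degrades as $D\to\infty$ and the reduction cannot be closed. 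Neither of the two escape routes you sketch (coset splitting in $\log D$ rounds, or a refined slice-rank argument) is carried out, so no proof results.

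One smaller correction: your description of the tensor $T$ has the support backwards. For a sunflower-free set $A$, the function $T$ restricted to $A\times A\times A$ is nonzero \emph{exactly on} the diagonal $x=y=z$ and vanishes off it; that is what allows Lemma \ref{lem:Tao-rank-hyperdiagonal-matrices} to lower-bound the slice rank by $|A|$. A $T$ supported away from the diagonal would give no information. As written, your proposal establishes at most the content of Theorem \ref{thm:Main-Z_D-sunflower-free-capacity}, not Conjecture \ref{conj:Erdos-Rado-Sunflower-Conjecture}.
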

In their paper, Erd\H{o}s and Rado \cite{ErdosRadoTheorem} proved that
any $k$-sunflower free family of sets of size $m$ has size at most
$m!(k-1)^{m}$, and the conjectured bound of $C_{k}^{m}$ remains
out of reach for any $k\geq3$. The second setting for upper bounds
for $k$-sunflower-free sets concerns the case where each member of
$\mathcal{F}$ is a subset of the same $n$-element set. There can
be at most $2^{n}$ such subsets, and the Erd\H{o}s-Szemer\'{e}di sunflower
conjecture states that this trivial upper bound can be improved by
an exponential factor.
\begin{conjecture}
\label{conj:Erdos-Szemeredi-Sunflower-Conjecture}(Erd\H{o}s-Szemer\'{e}di
Sunflower Conjecture \cite{ErdosSzemeredi1978UpperBoundsForSunflowerFreeSets})
Let $S$ be a $k$-sunflower free collection of subsets of $\left\{ 1,2,\dots,n\right\} $.
Then 
\[
|S|<c_{k}^{n}
\]
for some constant $c_{k}<2$ depending only on $k$.
\end{conjecture}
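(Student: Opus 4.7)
The plan is to extend the polynomial / slice-rank approach of this paper, which handles $k=3$, to arbitrary $k\geq 3$ by replacing the slice rank of $3$-tensors with the \emph{partition rank} of $k$-tensors, and then carrying out a combinatorial optimization that should produce $c_k<2$ for every $k$.

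First, encode each $A\in\mathcal{F}$ by its indicator vector $v_A\in\{0,1\}^n$, so that $k$ distinct sets $A_1,\ldots,A_k$ form a $k$-sunflower if and only if at every coordinate $i$ the tuple $(v_{A_1}(i),\ldots,v_{A_k}(i))\in\{0,1\}^k$ has Hamming weight in $\{0,1,k\}$ (either the element lies in the common kernel or in at most one of the $A_j$). Let $p\colon\{0,1\}^k\to\{0,1\}$ be the indicator of this ``sunflower pattern'', expressed as a multilinear polynomial in $k$ variables, and set
\[
T(v_1,\ldots,v_k) \;=\; \prod_{i=1}^{n} p\bigl(v_1(i),\ldots,v_k(i)\bigr).
\]
Then $T=1$ precisely on (possibly degenerate) sunflower configurations, and in particular $T(v,\ldots,v)=1$ for every $v$.

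If $\mathcal{F}$ is $k$-sunflower-free, then for $k$ distinct $v_1,\ldots,v_k\in\mathcal{F}$ one has $T(v_1,\ldots,v_k)=0$, so $T|_{\mathcal{F}^k}$ is supported on tuples with at least one coincidence. Möbius inversion over the lattice of set partitions of $\{1,\ldots,k\}$ expresses $T|_{\mathcal{F}^k}$ as the diagonal $k$-tensor $\delta_{\mathcal{F}}$ on $\mathcal{F}$ plus signed correction tensors $D_\pi$ supported on the strictly coarser diagonals indexed by $\pi$. Since the partition rank of $\delta_{\mathcal{F}}$ equals $|\mathcal{F}|$ (Naslund's diagonal lemma), one obtains $|\mathcal{F}|\leq \mathrm{prk}(T)+\sum_{\pi}\mathrm{prk}(D_\pi)$. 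Each $D_\pi$ factors through a tensor in strictly fewer variables, and I would handle the $D_\pi$ by an induction on $k$: the same tensor construction, applied to each diagonal stratum, relates them to $k'$-sunflower patterns for $k'<k$, with the base case $k=3$ supplied by this paper.

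Finally, to upper-bound $\mathrm{prk}(T)$, expand $p(y_1,\ldots,y_k)$ as a multilinear polynomial and distribute the product $\prod_{i=1}^n$ over coordinates; each resulting monomial becomes rank one once one fixes a partition $[k]=I\sqcup J$ and assigns each coordinate's monomial to the side with smaller support. Summing over optimally chosen $(I,J)$ yields a bound $\mathrm{prk}(T)\leq c_k^n$ where $c_k$ is the value of a convex / Krawtchouk-type entropy optimum over distributions on $\{0,1\}^k$ compatible with the support of $p$. The main obstacle is showing that this optimum satisfies $c_k<2$ uniformly in $k$: because $p$ vanishes on all $2^k-k-2$ non-sunflower patterns of weights $2,\ldots,k-1$, the polynomial is genuinely far from constant, and one expects a strict gap against the uniform entropy $\log 2$. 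Making this quantitative, and verifying that the Möbius corrections $D_\pi$ contribute only $o(c_k^n)$ so that the induction on $k$ closes cleanly, is the technical heart of the argument.
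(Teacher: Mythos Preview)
This statement is Conjecture~\ref{conj:Erdos-Szemeredi-Sunflower-Conjecture} in the paper, and the paper does \emph{not} prove it: only the case $k=3$ is established (Theorem~\ref{thm:Main-Sunflower-Free-Capacity}), and the general case is left open. So you are proposing a proof of an open problem, not reproducing a proof from the paper.

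Your outline has a genuine gap precisely at the step you flag as ``the main obstacle,'' and the obstacle is not merely technical. For $k=3$ the paper succeeds because the coordinatewise polynomial can be taken to be $2-(x_i+y_i+z_i)$, which has degree~$1$; the total degree of $T$ is then at most $n$, so some variable carries degree at most $n/3$, and the slice count $\sum_{m\le n/3}\binom{n}{m}$ is exponentially smaller than $2^n$. For general $k$ the cheapest polynomial vanishing on weights $2,\dots,k-1$ is $\prod_{w=2}^{k-1}(w-\sum_j y_j)$, which after multilinearisation has degree $k-2$. The total degree of $T$ is then at most $(k-2)n$, so the pigeonhole only guarantees some variable of degree at most $\frac{k-2}{k}n$. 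Already for $k=4$ this is $n/2$, and $4\sum_{m\le n/2}\binom{n}{m}\approx 2^{n+1}$ gives no saving whatsoever. Passing from slice rank to partition rank does not help: for a bipartition $I\sqcup J$ with $|I|\ge 2$ the signature space on the $I$ side has size $2^{|I|n}$, and the degree constraint you can extract is too weak to beat $2^n$. Your heuristic that ``$p$ vanishes on many patterns, so one expects a strict gap against $\log 2$'' is not a proof, and in fact the naive count above shows the gap is exactly zero at $k=4$.

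Two smaller points. First, the M\"obius inversion over the partition lattice is unnecessary: the paper's device of restricting to the layer $S_l$ of sets with exactly $l$ elements works for every $k$, since on $S_l$ any coincidence $v_i=v_j$ together with $T\neq 0$ forces all $k$ vectors to coincide (if $v_i(t)=v_j(t)=1$ the coordinate weight is at least $2$, hence must be $k$, so all vectors agree there; equal cardinality then forces global equality). So $T|_{S_l^k}$ is already diagonal and you can invoke the partition-rank diagonal lemma directly. Second, your inductive scheme for the correction tensors $D_\pi$ is not fleshed out: even granting the base case $k=3$, you would need the induction to produce bounds of the form $o(c_k^n)$ with the \emph{same} $c_k$, which you have not argued. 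But neither of these matters unless the core rank bound can be pushed below $2^n$ for $k\ge 4$, and that is where the plan breaks down.
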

In Erd\H{o}s and Szemer\'{e}di's paper \cite{ErdosSzemeredi1978UpperBoundsForSunflowerFreeSets}, they prove that conjecture \ref{conj:Erdos-Szemeredi-Sunflower-Conjecture} follows
from conjecture \ref{conj:Erdos-Rado-Sunflower-Conjecture}   (see also \cite[Theorem 2.3]{AlonSphilkaUmansSunflowerMatrix}), and so
it is a weaker variant of the sunflower problem. Let $F_{k}(n)$ denote
the size of the largest $k$-sunflower-free collection $\mathcal{F}$
of subsets of $\{1,2,\dots,n\}$, and define
\[
\mu_{k}^{S}=\limsup_{n\rightarrow\infty}F_{k}(n)^{1/n}
\]
to be the the \emph{Erd\H{o}s-Szmer\'{e}di-$k$-sunflower-free capacity}.
The trivial bound is $\mu_{k}^{S}\leq2$, and the Erd\H{o}s-Szemer\'{e}di
sunflower conjecture states that $\mu_{k}^{S}<2$ for all $k\geq3$.
In this paper we prove new bounds for the sunflower-free capacity
$\mu_{3}^{S}$. It is a theorem of Alon, Shpilka and Umans \cite[pp. 7]{AlonSphilkaUmansSunflowerMatrix}
that the recent work of Ellenberg and Gijswijt \cite{EllenbergGijswijtCapsets}
on progression-free sets in $\mathbb{F}_{3}^{n}$ implies that $\mu_{3}^{S}<2$,
and before this, the best upper bound for a sunflower-free collection
of $\{1,2,\dots,n\}$ was $2^{n}\exp\left(-c\sqrt{n}\right)$ due
to Erd\H{o}s and Szemer\'{e}di \cite{ErdosSzemeredi1978UpperBoundsForSunflowerFreeSets}. We give a simple proof of a quantitative version of \cite[pp. 7]{AlonSphilkaUmansSunflowerMatrix}, showing that $\mu_3^S \leq \sqrt{1+C}$ where $C$ is the capset capacity. However, using the ideas from the recent breakthrough of Ellenberg and Gijswijt
and Croot, Lev and Pach, \cite{CrootLevPachZ4,EllenbergGijswijtCapsets}
on progressions in $\mathbb{F}_{3}^{n}$, and from Tao's version of the argument \cite{TaosBlogCapsets}, we apply the polynomial
method directly to this problem, and obtain a stronger result:
\begin{thm}
\label{thm:Main-Sunflower-Free-Capacity}Let $\mathcal{F}$ be a sunflower-free
collection of subsets of $\{1,2,\dots,n\}$. Then 
\[
|\mathcal{F}|\leq3(n+1)\sum_{k\leq n/3}\binom{n}{k},
\]
and 
\[
\mu_{3}^{S}\leq\frac{3}{2^{2/3}}=1.889881574\dots
\]

\end{thm}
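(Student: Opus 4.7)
The plan is to apply the slice-rank polynomial method of Croot--Lev--Pach \cite{CrootLevPachZ4} and Ellenberg--Gijswijt \cite{EllenbergGijswijtCapsets}, in the symmetric form of Tao \cite{TaosBlogCapsets}, directly to $\mathcal{F}$ via its characteristic vectors in $\{0,1\}^n$. First I would pigeonhole to a most-populated layer
\[
\mathcal{F}_m=\{F\in\mathcal{F}:|F|=m\},\qquad |\mathcal{F}_m|\geq |\mathcal{F}|/(n+1),
\]
whose key feature is that it is automatically an antichain under set inclusion.

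The heart of the argument is the symmetric polynomial
\[
P(x,y,z)=\prod_{i=1}^{n}\bigl(2-x_i-y_i-z_i\bigr),\qquad x,y,z\in\{0,1\}^n,
\]
whose $i$th factor vanishes exactly when $x_i+y_i+z_i=2$, i.e.\ when coordinate $i$ realises the unique local pattern (two $1$s and one $0$) forbidden at a coordinate of a $3$-sunflower in $\{0,1\}^n$. I would then verify that $P|_{\mathcal{F}_m^3}$ is supported on the diagonal and nonzero there. For three distinct members of $\mathcal{F}_m$, the sunflower-free hypothesis produces a coordinate with exactly two $1$s, so $P=0$. For $(x,y,z)$ with exactly two coordinates equal, say $x=y\neq z$ (the other cases follow by the symmetry of $P$), the factor $2-2x_i-z_i$ vanishes whenever $x_i=1$ and $z_i=0$, and such a coordinate must exist because no member of the antichain $\mathcal{F}_m$ is contained in another. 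Finally, $P(x,x,x)=\prod_{i=1}^{n}(2-3x_i)=(-1)^{|x|}2^{n-|x|}\neq 0$. By the standard diagonal slice-rank lemma, the slice rank of $P|_{\mathcal{F}_m^3}$ equals $|\mathcal{F}_m|$.

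To upper-bound the slice rank, expand the product. Each monomial is of the form $x^{\mathbf{a}}y^{\mathbf{b}}z^{\mathbf{c}}$ with $\mathbf{a},\mathbf{b},\mathbf{c}\in\{0,1\}^{n}$, and because each of the $n$ factors contributes exactly one of the four terms $\{2,-x_i,-y_i,-z_i\}$, we have $|\mathbf{a}|+|\mathbf{b}|+|\mathbf{c}|\leq n$. Consequently every monomial has at least one of the three weights bounded by $n/3$. Grouping monomials by which variable realises the minimum and then by the exponent vector of that variable presents $P$ as a sum of at most $3\sum_{k\leq n/3}\binom{n}{k}$ slice-rank-one tensors. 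Combining the two bounds yields $|\mathcal{F}_m|\leq 3\sum_{k\leq n/3}\binom{n}{k}$, and so $|\mathcal{F}|\leq 3(n+1)\sum_{k\leq n/3}\binom{n}{k}$. The asymptotic $\mu_3^S\leq 3/2^{2/3}$ then follows from the entropy estimate $\sum_{k\leq n/3}\binom{n}{k}=2^{nH(1/3)+o(n)}$ and the identity $2^{H(1/3)}=3/2^{2/3}$.

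The subtle point is the interplay between the polynomial and the initial reduction. A product polynomial whose coordinate factors have total degree $1$ is precisely what buys the $n/3$ threshold in the slice-rank count; but any such $P$ is inevitably nonzero on triples of the form $(x,y,y)$ with $y\subsetneq x$, which would spoil the lower bound on the slice rank of $P|_{\mathcal{F}_m^3}$. The uniform-layer reduction is the cheapest way to eliminate all proper containments, at the cost of only a polynomial factor, and the rest of the argument then follows the Ellenberg--Gijswijt template essentially verbatim.
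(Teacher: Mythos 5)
Your proposal is correct and follows essentially the same route as the paper: the same product polynomial $\prod_i(2-x_i-y_i-z_i)$, the same reduction to a single layer of fixed cardinality (the paper sums over all layers rather than pigeonholing to the largest, but this gives the identical $(n+1)$ factor), the same antichain observation to handle the $x=y\neq z$ case, and the same monomial-grouping bound on the slice rank. The asymptotic step via $2^{H(1/3)}=3\cdot 2^{-2/3}$ is also the intended one.
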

The best known lower bound for this problem is $\mu_{3}^{S}\geq1.554$
due to the first author \cite{NaslundLowerBounds}, and so there is
still a large gap between upper and lower bounds for the sunflower-free
capacity $\mu_{3}^{S}$.

In section \ref{sec:Sunflower-Free-Sets-in-Z_D} we turn to the sunflower
problem in the set $\{1,2,\dots,D\}$, which we will always think of as $\mathbb Z/D \mathbb Z$ .
Alon, Shpilka and Umans \cite[Definition 2.5]{AlonSphilkaUmansSunflowerMatrix}, defined
a $k$-sunflower in $(\mathbb Z/D \mathbb Z)^{n}$ for $k\leq D$ to be a collection
of $k$ vectors such that in each coordinate they are either all different
or all the same. When $k=3$ and $D=3$ this condition is equivalent
to being a three-term arithmetic progression in $\mathbb{F}_{3}^{n}$. 
\begin{conjecture}
\label{conj:Sunflower-Conjecture-in-Z_D}(Sunflower-Conjecture in
$(\mathbb Z/D \mathbb Z)$) Let $k\leq D$, and let $A\subset(\mathbb Z/D \mathbb Z)^{n}$
be a $k$-sunflower-free set. Then 
\[
|A|\leq b_{k}^{n}
\]
for a constant $b_{k}$ depending only on $k$.
\end{conjecture}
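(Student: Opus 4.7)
The plan is to combine the character-based polynomial method of Section \ref{sec:Sunflower-Free-Sets-in-Z_D} with a random coloring reduction, aiming at a bound $|A| \leq b_k^n$ with $b_k$ independent of $D$.

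First I would extend the $k=3$ slice-rank argument to general $k$. For each coordinate $i \in \{1, \dots, n\}$, choose $f_i : (\mathbb{Z}/D\mathbb{Z})^k \to \mathbb{C}$ vanishing on every $k$-tuple that is neither all equal nor all distinct, and equal to $1$ on the diagonal, and set $F = \bigotimes_i f_i$ as a function on $((\mathbb{Z}/D\mathbb{Z})^n)^k$. Since $A$ is $k$-sunflower-free, $F|_{A^k}$ is supported on the diagonal of $A^k$ and nonvanishing there, so the order-$k$ slice-rank inequality (whose Gaussian-elimination proof, following Tao's argument cited in the paper, goes through unchanged for arbitrary $k$) yields $|A| \leq \mathrm{srk}(F|_{A^k})$. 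Expanding each $f_i$ in the character basis of $(\mathbb{Z}/D\mathbb{Z})^k$ and pigeonholing on the lightest-weight character factor produces a bound $|A| \leq g(k,D)^n$, generalising the bound announced in the abstract from $k=3$ to arbitrary $k$.

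Next, to eliminate the $D$-dependence, I would apply a random equipartition $\pi : \mathbb{Z}/D\mathbb{Z} \to \{1,\dots,k\}$ and project $A$ coordinate-wise to $\bar{A} \subseteq \{1,\dots,k\}^n$. A $k$-subset of $A$ whose $\pi$-image is a $k$-sunflower in $\bar{A}$ but which is not itself a sunflower in $A$ must, in every coordinate, have $\pi$ collapse some but not all of its $k$ values; the probability of this event decays exponentially in the number of coordinates where the configuration is "spread out". A Chernoff-and-pruning step should remove such problematic configurations while losing only a polynomial factor in $|A|$, leaving a subset $A' \subseteq A$ whose image $\pi(A')$ is $k$-sunflower-free in $\{1,\dots,k\}^n$. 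Applying the first step at $D = k$ gives $|\pi(A')| \leq g(k,k)^n =: b_k^n$, and an averaging over fibers of $\pi$ would complete the argument.

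The principal obstacle, and the reason this conjecture is believed to be hard, is controlling the fibers of $\pi$: a given fiber $A \cap \pi^{-1}(\bar{a})$ can itself be a large sunflower-free subset of $(\mathbb{Z}/D\mathbb{Z})^n$, and naive averaging reintroduces $D$-dependent factors that destroy the desired bound. Indeed, by \cite[Theorem 2.6]{AlonSphilkaUmansSunflowerMatrix}, a $D$-independent bound in Conjecture \ref{conj:Sunflower-Conjecture-in-Z_D} is equivalent to the Erd\H{o}s--Rado sunflower conjecture, so overcoming this fiber blow-up would constitute a genuine breakthrough. A successful resolution would likely require either a new slice-rank-type inequality whose underlying rank depends only on $k$ and not on $D$, or a recursive refinement that iteratively amplifies the $D$-dependent bound into a $D$-free one by bootstrapping the coloring reduction across nested alphabet sizes.
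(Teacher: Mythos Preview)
The statement you are attempting to prove is Conjecture~\ref{conj:Sunflower-Conjecture-in-Z_D}, which the paper does \emph{not} prove. The paper only establishes Theorem~\ref{thm:Main-Z_D-sunflower-free-capacity}, the bound $|A|\le c_D^n$ with $c_D=\tfrac{3}{2^{2/3}}(D-1)^{2/3}$ depending on $D$, for the case $k=3$; the $D$-independent bound of the conjecture is, as you yourself note via \cite[Theorem~2.6]{AlonSphilkaUmansSunflowerMatrix}, equivalent to the Erd\H{o}s--Rado sunflower conjecture and remains wide open. There is therefore no proof in the paper to compare your proposal against.

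Your first step, extending the character/slice-rank argument from $k=3$ to general $k$, is a natural generalisation of the paper's method and would plausibly produce a bound of the shape $g(k,D)^n$, but with $g(k,D)$ still growing like a power of $D$. Your second step, the random-colouring reduction to alphabet size $k$, has precisely the gap you diagnose: a fibre $A\cap\pi^{-1}(\bar a)$ is itself just a sunflower-free subset of $(\mathbb{Z}/D\mathbb{Z})^n$, so bounding it forces you back into the original problem, and the averaging over fibres reintroduces a factor of order $(D/k)^n$ that swamps any gain. No pruning or Chernoff step can repair this, because the obstruction is not about rare bad configurations but about the sheer multiplicity of preimages. Your self-assessment is correct: the proposal is a sketch of why the problem is hard, not a proof, and the conjecture remains genuinely open.
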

The motivation for this problem comes from \cite[Theorem 2.6]{AlonSphilkaUmansSunflowerMatrix}
where they proved that conjecture \ref{conj:Sunflower-Conjecture-in-Z_D}
is equivalent to the Erd\H{o}s-Rado sunflower conjecture. In particular,
if there exists a constant $C$ independent of $D$ such that any
$3$-sunflower-free set in $(\mathbb Z/D \mathbb Z)^{n}$ has size at most $C^{n}$,
then Conjecture \ref{conj:Erdos-Rado-Sunflower-Conjecture} holds
for $k=3$ with $c_{3}=e\cdot C$. Since a sunflower-free set cannot
contain a $3$-term arithmetic progression, the recent result
of Ellenberg and Gijswijt \cite{EllenbergGijswijtCapsets} implies an
upper bound for sunflower-free sets $A\subset(\mathbb Z/D \mathbb Z)^{n}$ for $D$
prime of the form $|A|\leq c_D^{n}$, where $c_D = D e^{ -I((D-1)/3)}$ for a function $I$ defined in \cite{EllenbergGijswijtCapsets} in terms of a certain optimization problem. It's not too hard to see that
\[0 < \lim_{D \to \infty} \frac{c_D}{D} <1\]
Using the characters $\chi:\mathbb{Z}/D\mathbb{Z}\rightarrow\mathbb{C}$ instead of polynomials,
we prove the following theorem:
\begin{thm}
\label{thm:Main-Z_D-sunflower-free-capacity}Let $D\geq3$, and let $A\subset(\mathbb Z/D \mathbb Z)^{n}$
be a sunflower-free set. Then 
\[
|A|\leq c_{D}^{n}
\]
where $c_{D}=\frac{3}{2^{2/3}}(D-1)^{2/3}.$
\end{thm}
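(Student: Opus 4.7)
The plan is to adapt the slice-rank version of the polynomial method of Croot-Lev-Pach and Ellenberg-Gijswijt, with the role of polynomials played by characters $\omega^{\alpha \cdot x}$ of the group $(\mathbb{Z}/D\mathbb{Z})^n$, where $\omega = e^{2\pi i/D}$. First I will construct a tensor $F:((\mathbb{Z}/D\mathbb{Z})^n)^3 \to \mathbb{C}$ of the form $F(x,y,z) = \prod_{i=1}^n \phi(x_i, y_i, z_i)$, where the local kernel $\phi:(\mathbb{Z}/D\mathbb{Z})^3 \to \mathbb{C}$ must vanish whenever exactly two of its three arguments coincide, so that on any sunflower-free $A$ the restriction $F|_{A^3}$ is a nonzero scalar multiple of the diagonal indicator. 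Among the choices of $\phi$ with this property I will take
\[
\phi(a, b, c) = 1 - \mathbf{1}[a = b] - \mathbf{1}[a = c] - \mathbf{1}[b = c],
\]
which equals $1$ when $a, b, c$ are pairwise distinct, $-2$ when they all agree, and $0$ otherwise. Then $F|_{A^3}(x, y, z) = (-2)^n \mathbf{1}[x = y = z]$, and Tao's slice-rank identity for diagonal tensors forces the slice rank of $F|_{A^3}$ to equal $|A|$.

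Next I will Fourier-expand $\phi$ in the character basis of $(\mathbb{Z}/D\mathbb{Z})^3$. Writing $\mathbf{1}[a = b] = \tfrac{1}{D}\sum_{t} \omega^{t(a - b)}$ shows directly that $\hat{\phi}(\alpha, \beta, \gamma)$ is supported on triples of the form $(\alpha, -\alpha, 0)$, $(\alpha, 0, -\alpha)$, $(0, \beta, -\beta)$, together with the trivial triple; this is the key feature that distinguishes this choice of $\phi$ from the indicator of ``all equal or all distinct,'' whose Fourier support also contains triples in which all three of $\alpha, \beta, \gamma$ are nonzero. Tensoring, every character $\omega^{\alpha \cdot x + \beta \cdot y + \gamma \cdot z}$ with nonzero coefficient in $F$ has the property that in each coordinate $i$ either $(\alpha_i, \beta_i, \gamma_i) = (0, 0, 0)$ or exactly two of these three entries are nonzero. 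Hence the Hamming weights $|\alpha|_0, |\beta|_0, |\gamma|_0$ satisfy $|\alpha|_0 + |\beta|_0 + |\gamma|_0 \leq 2n$, and by averaging $\min(|\alpha|_0, |\beta|_0, |\gamma|_0) \leq 2n/3$.

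With this inequality in hand, the slice-rank decomposition is routine. I will partition the nonzero terms in the Fourier expansion of $F$ into three classes according to which of $|\alpha|_0, |\beta|_0, |\gamma|_0$ is smallest, breaking ties arbitrarily. Terms in the first class reorganize as $\sum_\alpha \omega^{\alpha \cdot x} G_\alpha(y, z)$ summed over $\alpha \in (\mathbb{Z}/D\mathbb{Z})^n$ with $|\alpha|_0 \leq 2n/3$, so this class contributes slice rank at most $\sum_{k \leq 2n/3} \binom{n}{k}(D-1)^k$; the other two classes contribute symmetrically. Comparing with the slice rank of the diagonal yields
\[
|A| \leq 3 \sum_{k \leq 2n/3} \binom{n}{k}(D-1)^k.
\]

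Finally I will estimate this sum asymptotically. For $D \geq 3$ the mode of $\binom{n}{k}(D-1)^k$ occurs at $k = (D-1)n/D \geq 2n/3$, so the sum is dominated, up to a polynomial factor, by its top term at $k = 2n/3$; Stirling's formula then gives $\binom{n}{2n/3}^{1/n} \to 3/2^{2/3}$, and hence $|A| \leq c_D^{n(1+o(1))}$ with $c_D = \frac{3}{2^{2/3}}(D-1)^{2/3}$. The only point in the argument that requires any ingenuity is the choice of $\phi$: the obvious candidate (the indicator of ``all equal or all distinct'') has Fourier support on the entire plane $\alpha + \beta + \gamma = 0$, which allows characters with $(\alpha_i, \beta_i, \gamma_i)$ fully nonzero at every $i$, for which $\min(|\alpha|_0, |\beta|_0, |\gamma|_0)$ can be as large as $n$ and the slice-rank pigeonhole yields nothing. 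Writing $\phi$ instead as a linear combination of pair-equality indicators precisely eliminates those modes while preserving the diagonal structure on $A^3$.
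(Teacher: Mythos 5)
Your construction coincides with the paper's up to an overall sign: your kernel $\phi(a,b,c)=1-\mathbf{1}[a=b]-\mathbf{1}[a=c]-\mathbf{1}[b=c]$ is $(-1)$ times the factor $\frac{1}{D}\sum_{\chi}\bigl(\chi(a)\overline{\chi(b)}+\chi(b)\overline{\chi(c)}+\chi(a)\overline{\chi(c)}\bigr)-1$ used in Section \ref{sec:Sunflower-Free-Sets-in-Z_D}, your Fourier-support observation (each coordinate contributes either the trivial character triple or exactly two nontrivial characters, hence at most $2n$ nontrivial characters in total) is exactly the paper's counting of nontrivial characters, and the resulting slice-rank bound $3\sum_{k\le 2n/3}\binom{n}{k}(D-1)^k$ is the same. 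The appeal to Lemma \ref{lem:Tao-rank-hyperdiagonal-matrices} and the handling of the case where exactly two of $x,y,z$ coincide are also correct.

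The one genuine shortfall is the endgame. The theorem asserts the exact bound $|A|\le c_D^n$, whereas your Stirling estimate only delivers $|A|\le c_D^{n(1+o(1))}$: you lose a polynomial factor from summing over $k\le 2n/3$ and from Stirling, and you never remove the leading factor of $3$. The paper closes this in two steps. First, since $D\ge 3$ gives $\frac{D-1}{2}\ge 1$, each term with $k\le 2n/3$ may be multiplied by $\left(\frac{D-1}{2}\right)^{2n/3-k}\ge 1$, and then the sum extended to all $k$ and evaluated by the binomial theorem:
\[
\sum_{k\le 2n/3}\binom{n}{k}(D-1)^k\le\left(\frac{D-1}{2}\right)^{-n/3}\sum_{k=0}^{n}\binom{n}{k}(D-1)^k\left(\frac{D-1}{2}\right)^{n-k}=\left(\frac{3}{2^{2/3}}(D-1)^{2/3}\right)^{n},
\]
which yields $|A|\le 3c_D^n$ with no subexponential slack. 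Second, the remaining constant is removed by amplification: $A^k$ is sunflower-free in $(\mathbb{Z}/D\mathbb{Z})^{nk}$, so $|A|\le(3c_D^{nk})^{1/k}=3^{1/k}c_D^n\to c_D^n$. The same amplification would equally absorb the polynomial losses in your version, so your argument is repairable in one line; but as written it proves a strictly weaker statement than the one claimed.
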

\noindent This can be seen as progress towards the Erd\H{o}s-Rado sunflower
conjecture, and we remark that the now resolved Erd\H{o}s-Szemer\'{e}di conjecture
for $k=3$ is equivalent to proving that $c_{D}<D^{1-\epsilon}$ for some $\epsilon$ and all $D$ sufficiently large \cite[Theorem 2.7]{AlonSphilkaUmansSunflowerMatrix}. 

To prove Theorem \ref{thm:Main-Sunflower-Free-Capacity} and \ref{thm:Main-Z_D-sunflower-free-capacity}
we bound the slice rank of a function of three variables $T(x,y,z)$ which is nonvanishing if and only if $x=y=z$ or $x,y,z$ form a sunflower. 
%\begin{equation}
%T(x,y,z)=\begin{cases}
%1 & x=y=z\\
%1 & x,y,z\text{ form a sunflower}\\
%0 & \text{otherwise}
%\end{cases}.\label{eq:sunflower_indicator_function}
%\end{equation}

A function $f:A^{k}\rightarrow\mathbb{F}$, where $A^{k}=A\times A\times\cdots\times A$
denotes the cartesian product and $\mathbb{F}$ is a field, is said
to be a \emph{slice} if it can be written in the form
\[
f(x_{1},\dots,x_{k})=h(x_{i})g(x_{1},\dots,x_{i-1},x_{i+1},\dots,x_{k})
\]
where $h:A\rightarrow\mathbb{F}$ and $g:A^{k-1}\rightarrow\mathbb{F}$.
The \emph{slice rank} of a general function $f:A^{k}\rightarrow\mathbb{F}$ is
the smallest number $m$ such that $f$ is a linear combination of $m$ slices. If $A$ is a sunflower-free set, it
follows that, for $x,y,z\in A$, $T(x,y,z)$ is nonzero if and only if $x=y=z$.

% function $T:A\times A\times A\rightarrow\mathbb{C}$
%described above can be written in the form form 
%\[
%T(x,y,z)=\sum_{a\in A}c_a\delta_{a}(x)\delta_{a}(y)\delta_{a}(z)
%\]
%where $\delta_{a}(x)$ denotes the delta function 
%\[
%\delta_{a}(x)=\begin{cases}
%1 & x=a\\
%0 & \text{otherwise}
%\end{cases}.
%\]
We then have the following lemma:
\begin{lem}
\label{lem:Tao-rank-hyperdiagonal-matrices}(Rank of diagonal hypermatrices
\cite[Lemma 1]{TaosBlogCapsets}) Let $A$ be a finite set and $\mathbb{F}$
a field. Let $T(x,y,z)$ be a function $A \times A \times A \to \mathbb F$ such that $T(x,y,z)\neq 0$ if and only if $x=y=z$. Then the slice rank of $T$ is equal to $|A|$.
\end{lem}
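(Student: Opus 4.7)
My plan is to handle the two inequalities separately. The upper bound $\mathrm{srk}(T) \leq |A|$ is immediate from the hypothesis: since $T$ vanishes off the diagonal, we can write
\[
T(x,y,z) = \sum_{a \in A} T(a,a,a)\,\mathbf{1}_{x=a}(x)\cdot\bigl(\mathbf{1}_{y=a}(y)\,\mathbf{1}_{z=a}(z)\bigr),
\]
which is visibly a sum of $|A|$ slices in the $x$ variable.

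For the lower bound, I would argue by contradiction. Suppose $T=\sum_{i=1}^{r} S_{i}$ is a decomposition into $r$ slices, and partition the $S_{i}$ according to which of the three variables is singled out, obtaining counts $r_{X}+r_{Y}+r_{Z}=r$; write the $X$-type part as $\sum_{i\in I_{X}}f_{i}(x)g_{i}(y,z)$. The idea is to kill the $X$-type slices by contracting in $x$ against a cleverly chosen $v:A\to\mathbb{F}$. The annihilator
\[
W_{X} = \Bigl\{v\in\mathbb{F}^{A} : \sum_{x\in A} v(x)f_{i}(x)=0 \text{ for all } i\in I_{X}\Bigr\}
\]
has dimension at least $|A|-r_{X}$, and I would invoke the standard linear-algebra fact that a $d$-dimensional subspace of $\mathbb{F}^{A}$ contains a vector whose support has size at least $d$ to produce $v\in W_{X}$ with $|\mathrm{supp}(v)|\geq |A|-r_{X}$.

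For such $v$, the contracted function $M_{v}(y,z):=\sum_{x}v(x)T(x,y,z)$ is a linear combination of only $Y$- and $Z$-type slices in the two variables $y,z$, so its slice rank (which in two variables coincides with matrix rank) is at most $r_{Y}+r_{Z}$. On the other hand, the hypothesis on $T$ forces
\[
M_{v}(y,z) = v(y)\,T(y,y,y)\,\mathbf{1}_{y=z},
\]
a diagonal matrix whose rank is exactly $|\mathrm{supp}(v)|$, because $T(a,a,a)\neq 0$ for every $a\in A$. Combining the two bounds gives $|A|-r_{X}\leq r_{Y}+r_{Z}$, hence $|A|\leq r$. The only step above that requires more than bookkeeping is the support lemma for subspaces of $\mathbb{F}^{A}$; it follows by choosing $d$ coordinates on which the restriction from $W_{X}$ is an isomorphism and lifting a vector that is nowhere zero on those coordinates, and this is the main (if minor) technical ingredient I expect to have to spell out.
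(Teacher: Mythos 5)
Your proof is correct, and it is essentially the argument from the cited source (the paper itself does not prove this lemma but defers to Tao's blog, whose Lemma 1 is proved exactly this way): the upper bound via the diagonal decomposition, and the lower bound by annihilating the $x$-slices with a vector $v$ of large support and observing that the contracted matrix $M_v$ is diagonal of rank $|\mathrm{supp}(v)|$ yet has rank at most $r_Y+r_Z$. All the steps, including the support lemma for a $d$-dimensional subspace of $\mathbb{F}^{A}$, are sound over an arbitrary field.
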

Using this lemma, we need only find an upper bound on the slice rank of $T$ to obtain an upper bound on the size of the sunflower-free set. In each case we do this by an explicit decomposition of $T$ into slices found by writing $T$ as either a polynomial or as a sum of characters. We refer the reader to section 4 of \cite{BlasiakChurchCohnGrochowNaslundSawinUmans2016MatrixMultiplication} for further discussion of the slice rank.

This method is the direct analogue of Tao's interpretation \cite{TaosBlogCapsets} of the Ellenberg-Gijswijt argument for capsets, and  can be thought of as a $3$-tensor generalization of the
Haemmer bound \cite{Haemer1978AnUpperBoundForTheShannonCapacityOfAGraph},
which bounds the Sperner capacity of a hypergraph rather than the
Shannon capacity of a graph. 

We stress two differences between our result and several other papers which use the slice rank method \cite{TaosBlogCapsets} \cite{BlasiakChurchCohnGrochowNaslundSawinUmans2016MatrixMultiplication}, or which have been reinterpreted to use the slice rank \cite{CrootLevPachZ4} \cite{EllenbergGijswijtCapsets}. First, these papers study functions valued in finite fields, whose characteristic is chosen for the specific problem and cannot be changed without affecting the bound. Our work uses functions valued in a field of characteristic zero, though we could have done the same thing in any finite field of sufficiently large characteristic. Second, these papers mainly describe functions as low-degree polynomials and use that structure to bound their slice rank. In the proof of Theorem \ref{thm:Main-Z_D-sunflower-free-capacity}, we describe functions as sums of characters. One can interpret characters as polynomials restricted to the set of roots of unity, but under this interpretation the degree of the polynomial is not relevant to the proof of Theorem \ref{thm:Main-Z_D-sunflower-free-capacity} - only the number of nontrivial characters is.

\begin{rem}
The proofs of theorems \ref{thm:Main-Sunflower-Free-Capacity} and
\ref{thm:Main-Z_D-sunflower-free-capacity} can be extended without
modification to handle a multicolored version of the problem analogous to multicolored sum-free sets as defined in \cite{BlasiakChurchCohnGrochowNaslundSawinUmans2016MatrixMultiplication}\emph{. }
\end{rem}

\section{\label{sec:The-Erdos-Szemeredi-Sunflower-Problem}The Erd\H{o}s-Szemer\'{e}di
Sunflower Problem}

Any subset of $\{1,2,\dots.n\}$ corresponds to a vector in $\{0,1\}^{n}$
where a $1$ or $0$ in coordinate $i$ denotes whether or not $i$
lies in the subset. A sunflower-free collection of subsets of $\{1,2,\dots,n\}$
gives rise to a set $S\subset\{0,1\}^{n}$ with the property that
for any three distinct vectors $x,y,z\in S$, there exists $i$
such that $\{x_{i},y_{i},z_{i}\}=\{0,1,1\}$.

Moreover, a sunflower-free collection of subsets of $\{1,2,\dots,n\}$ that also does not contain two subsets with one a proper subset of the other gives rise to a set $S \subset \{0,1\}^{n}$ such that for any $x,y,z\in S$ not all equal, there exists $i$ such that $\{x_{i},y_{i},z_{i}\}=\{0,1,1\}$. This holds because the only new case is when two are equal and the third is not (say $x=y$ and $z$ is distinct), and then because $x \neq z$, $x$ is not a subset of $z$, so there exists some $i$ such that $x_i=y_i=1$ and $z_i=0$.

Given a sunflower-free
set $S\subset\{0,1\}^{n}$, let $S_{l}$, for $l=1,\dots,n$, denote
the elements of $S$ with exactly $l$ ones so that $S=\cup_{l=0}^{n}S_{l}$. Then for each $l$, $S_l$ is a sunflower-free collection of subsets with none a proper subset of another, hence whenever $x,y,z\in S_{l}$ satisfy $x+y+z\notin\{0,1,3\}^{n}$
we must have $x=y=z$. For $x,y,z\in\{0,1\}^{n}$ consider the function
$T:\left\{ 0,1\right\} ^{n}\times\left\{ 0,1\right\} ^{n}\times\left\{ 0,1\right\} ^{n}\rightarrow\mathbb{R}$
given by 
\[
T(x,y,z)=\prod_{i=1}^{n}\left(2-(x_{i}+y_{i}+z_{i})\right).
\]
The function $T(x,y,z)$ is nonvanishing precisely on triples $x,y,z$ such that there does not exist $i$ where $\{x_i,y_i,z_i\}=\{1,1,0\}$. Hence restricted to $S_l \times S_l \times S_l$, $T(x,y,z)$ is nonzero if and only if $x=y=z$. So by Lemma \ref{lem:Tao-rank-hyperdiagonal-matrices}, the slice rank of $T$ is at least
$|S_{l}|$. Expanding
 the product form for $T(x,y,z)$, we may write $T(x,y,z)$ as a linear
combination of products of three monomials 
\[
x_{1}^{i_{1}}\cdots x_{n}^{i_{n}}y_{1}^{j_{1}}\cdots y_{n}^{j_{n}}z_{1}^{k_{1}}\cdots z_{n}^{k_{n}}
\]
 where $i_{1},\dots,i_{n},j_{1},\dots,j_{n},k_{1},\dots,k_{n}\in\{0,1\}^{n}$,
and 
\[
i_{1}+\cdots+i_{n}+j_{1}+\cdots+j_{n}+k_{1}+\cdots+k_{n}\leq n.
\]
For each product of three monomials, at least one of $i_{1}+\cdots+i_{n}$, $j_{1}+\cdots+j_{n}$,
$k_{1}+\cdots+k_{n}$ is at most $n/3$. For each term in $T$, choose either $x_{1}^{i_{1}}\cdots x_{n}^{i_{n}}$, $y_{1}^{j_{1}}\cdots y_{n}^{j_{n}}$, or $z_{1}^{k_{1}}\cdots z_{n}^{k_{n}}$, making sure to choose one of total degree at most $n/3$.  Divide the expansion of $T$ into, for each possible monomial, the sum of all the terms where we chose that monomial. Because one monomial in each of these sums is fixed, we can express each sum as a product of that monomial (a function of one variable) times the sum of all the other terms (a function of the other variables), hence each of these sums is a slice. The total slice rank is at most the number of slices, which is at most the number of monomials we can choose: $3$ times the number of monomials in $n$ variables of degree at most $1$ in each variable and of total degree at most $k$. The number of such monomials is exactly $\sum_{k\leq n/3}\binom{n}{k}$, so this yields the upper bound 
\[
|S_{l}|\leq3\sum_{k\leq n/3}\binom{n}{k},
\]
\[ |S| \leq \sum_{l=0}^n |S_l| \leq 3(n+1) \sum_{k\leq n/3}\binom{n}{k}\]
which is the statement of Theorem \ref{thm:Main-Sunflower-Free-Capacity}.

\subsection{Capset Capacity}

A capset $A$ is a subset of $\mathbb{F}_{3}^{n}$ containing no three-term arithmetic progressions. Let $A_{n}\subset\mathbb{F}_{3}^{n}$
denote the largest capset in dimension in $n$, and define
\[
C=\limsup_{n\rightarrow\infty}|A_{n}|^{1/n}
\]
to be the \emph{capset capacity}. Note that $|A_n|$ is super-multiplicative, that is for $m,n\geq 1$ we have $|A_{mn}|\geq |A_{n}|^m$ since $A_n$ Cartesian-producted with itself $m$ times is a capset in $\mathbb{F}_3^{mn}$. Ellenberg and Gijswijt \cite{EllenbergGijswijtCapsets}
proved that $C\leq2.7552$, and the following theorem is a quantitative
version of a result of Alon, Sphlika, and Umans \cite[pp. 7]{AlonSphilkaUmansSunflowerMatrix}.
\begin{thm}
We have that $\mu_{3}^{S}\leq\sqrt{1+C}$ where $C$ is the capset
capacity and $\mu_{3}^{S}$ is th\emph{e }Erd\H{o}s-Szmeredi-sunflower-free
capacity.\end{thm}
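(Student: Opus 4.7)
The plan is to reduce to the capset bound in $\mathbb{F}_3^n$ via an embedding of $S$ into a capset, combined with a Cauchy-Schwarz argument. We may assume (by a standard reduction that does not change $\mu_3^S$) that $S$ is an antichain. For each $A\in S$, let $W_A\subseteq\mathbb{F}_3^n$ be (close to) a maximum capset contained in the coordinate subspace $V_A:=\{v\in\mathbb{F}_3^n:v_i=0\text{ for }i\in A\}\cong\mathbb{F}_3^{n-|A|}$. By the supermultiplicativity of the capset capacity (noted in the excerpt), for any $\varepsilon>0$ we can arrange $|W_A|\geq (C-\varepsilon)^{n-|A|}$ whenever $n-|A|$ is sufficiently large. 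Set $\tilde S:=\bigsqcup_{A\in S}W_A\subseteq\mathbb{F}_3^n$, viewing the $W_A$'s as disjoint via a canonical support-based assignment of each vector in $\tilde S$ to a single $A\in S$.

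The key technical claim is that $\tilde S$ is itself a capset in $\mathbb{F}_3^n$. Given three distinct $v_1,v_2,v_3\in\tilde S$ with $v_1+v_2+v_3\equiv 0 \pmod 3$, arising from $A_j\in S$, suppose first that the $A_j$'s are pairwise distinct. The sunflower-freeness of $S$ furnishes a coordinate $i$ lying in exactly two of them, say $i\in A_1\cap A_2$ with $i\notin A_3$. Since $v_j\in W_{A_j}\subseteq V_{A_j}$, we have $v_{1,i}=v_{2,i}=0$, so the sum-zero condition forces $v_{3,i}=0$; a judicious choice of $W_{A_3}$ (for instance, restricting to vectors with support exactly $A_3^c$, or exploiting the antichain structure) rules this out, giving a contradiction. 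The antichain assumption disposes of triples with repeated $A_j$'s, and the capset property of an individual $W_A$ handles the fully coincident case. This is the main obstacle: the most direct implementation of "support exactly $A^c$" caps $|W_A|$ at $2^{n-|A|}$, which would yield only the weaker bound $\mu_3^S\leq\sqrt{3C/2}$; achieving the full $\sqrt{1+C}$ requires a more delicate construction (e.g., allowing controlled overlap between the $W_A$'s and showing that the multiplicities contribute only sub-exponentially).

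Granting the capset property and disjointness, the capset bound of the excerpt gives $\sum_{A\in S}|W_A|=|\tilde S|\leq C^n$. Substituting $|W_A|\geq (C-\varepsilon)^{n-|A|}$ and letting $\varepsilon\to 0$ yields $\sum_{A\in S}C^{-|A|}\leq 1+o(1)$. Applying the Cauchy-Schwarz inequality with weights $C^{\pm|A|/2}$,
\[
|S|^2=\Bigl(\sum_{A\in S}1\Bigr)^2\leq\Bigl(\sum_{A\in S}C^{|A|}\Bigr)\Bigl(\sum_{A\in S}C^{-|A|}\Bigr)\leq (1+C)^n\cdot(1+o(1)),
\]
where we used $\sum_{A\subseteq[n]}C^{|A|}=(1+C)^n$ to bound the first factor. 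Taking $n$-th roots and passing to the limsup gives $\mu_3^S\leq\sqrt{1+C}$, as claimed.
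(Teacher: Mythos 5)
Your proposal has a genuine gap, and you have correctly identified where it is: the ``key technical claim'' that one can choose capsets $W_A\subseteq V_A$ with $|W_A|\geq(C-\varepsilon)^{n-|A|}$ whose union is again a capset is never established. The only version of the construction you can actually verify forces $W_A$ to consist of vectors with support \emph{exactly} $A^c$ (this is essentially unavoidable: in the three-distinct-sets case the coordinate $i$ furnished by sunflower-freeness depends on the other two sets $A_1,A_2$, so a single $v_3\in W_{A_3}$ must be nonzero at every coordinate of $A_3^c$ that could arise this way, which for a general family is all of $A_3^c$). There are only $2^{n-|A|}$ such vectors, so $|W_A|\leq 2^{n-|A|}$ no matter how cleverly the capset is chosen, and the resulting bound $\mu_3^S\leq\sqrt{3C/2}\approx 2.03$ is worse than the trivial bound $2$. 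The ``controlled overlap'' escape route is not an argument, and I do not see how to make it one: replacing $2$ by $C$ in the size of $W_A$ requires admitting vectors with zeros outside $A$, which is exactly what breaks the capset property of the union. (The antichain reduction and the final Cauchy--Schwarz computation are fine as far as they go; the problem is entirely in the construction of $\tilde S$.)

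For comparison, the paper gets the square root not from Cauchy--Schwarz but from halving the dimension. One takes $S\subseteq\{0,1\}^{2n}$, pairs up coordinates, and reads each pair as one of the four symbols $u_0=(0,0)$, $u_1=(1,0)$, $u_2=(0,1)$, $u_3=(1,1)$, giving $\tilde S\subseteq\{0,1,2,3\}^n$. Fixing the set of positions carrying the symbol $3$ (indexed by $x\in\{0,1\}^n$ with weight $w(x)$) and deleting those positions leaves a subset of $\mathbb{F}_3^{\,n-w(x)}$; since $u_0,u_1,u_2$ form a sunflower, a nontrivial three-term progression there would produce a sunflower in $S$, so each such slice is a capset of size at most $C^{\,n-w(x)}$. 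Summing over $x$ gives $|S|\leq\sum_{j}\binom{n}{j}C^{\,n-j}=(1+C)^n$ in ambient dimension $2n$, i.e.\ $\mu_3^S\leq\sqrt{1+C}$. The symbol $u_3$ plays exactly the role your full-support condition was trying to play, but because it occupies two ambient coordinates there is no loss from a factor $2$ versus $C$; that is the idea missing from your argument.
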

\begin{proof}
We will bound the size of the largest sunflower-free set in $\{0,1\}^{2n}$
by writing each vector in terms of the four vectors in $\{0,1\}^{2}$
\[
u_{0}=\left[\begin{array}{c}
0\\
0
\end{array}\right],\ u_{1}=\left[\begin{array}{c}
1\\
0
\end{array}\right],\ u_{2}=\left[\begin{array}{c}
0\\
1
\end{array}\right],\ u_{3}=\left[\begin{array}{c}
1\\
1
\end{array}\right].
\]
Every set $S\subset\{0,1\}^{2n}$ corresponds to a set $\tilde{S}\in\{0,1,2,3\}^{n}$
where we obtain $S$ from $\tilde{S}$ by replacing each symbol $i$
for $i\in\{0,1,2,3\}$ with the vector $u_{i}$. For example, 
\[
\left[\begin{array}{c}
1\\
0\\
1\\
1
\end{array}\right]\longleftrightarrow\left[\begin{array}{c}
1\\
3
\end{array}\right]\text{ and }\left[\begin{array}{c}
0\\
1\\
0\\
0
\end{array}\right]\longleftrightarrow\left[\begin{array}{c}
2\\
0
\end{array}\right].
\]
 For each $x\in\{0,1\}^{n}$ consider 
\[
\tilde{S}_{x}=\left\{ v\in\tilde{S}:\ v_{i}=3\text{ if and only if }x_{i}=1\right\} .
\]
We may view elements of $\tilde{S}_x$ as elements of $\{0,1,2\}^{n-x} =\mathbb F_3^{n-x}$ by ignoring the coordinates where $x$ is $1$. If three elements in $\tilde{S}_x$ form an arithmetic progression in $\mathbb F_3^{n-x}$, then in each coordinate the elements of $\tilde{S}_x$ are either all the same or are $0,1,2$ in any order, so the entries of the corresponding vectors in $S$ are either all the same or $u_0,u_1,u_2$ in any order. Because $u_{0},u_{1},u_{2}$ form a sunflower, these three elements of $S$ form a sunflower. Because $S$ is a sunflower-free set, $\tilde{S}_x$ is a capset. Let $w(x)=\sum_{i=1}^{n}x_{i}$ be the weight of the vector $x$, then
\[
|\tilde{S}_{x}|\leq C^{n-w(x)}
\]
where $C$ is the capset capacity. Hence 
\[
|S|\leq\sum_{x}C^{n-w(x)}=\sum_{j=0}^{n}\binom{n}{j}C^{n-j}=(1+C)^n,
\]
and we obtain the desired bound.
\end{proof}

Using the Ellenberg-Gijswijt upper bound on capset capacity, this gives $\mu_3^S \leq 1.938$, which is not as strong a bound as Theorem \ref{thm:Main-Sunflower-Free-Capacity}.

\section{\label{sec:Sunflower-Free-Sets-in-Z_D}Sunflower-Free Sets in $(\mathbb Z/D \mathbb Z)^{n}$}

Consider the $D$ characters $\chi:\mathbb{Z}/D\mathbb{Z}\rightarrow\mathbb{C}^{\times}$.
By the orthogonality relations, for any $a,b\in\mathbb{Z}/D\mathbb{Z}$
\[
\frac{1}{|D|}\sum_{\chi}\chi(a-b)=\begin{cases}
1 & \text{if}\ a=b\\
0 & \text{otherwise}
\end{cases}.
\]
Hence 
\[
\frac{1}{|D|}\sum_{\chi}\left(\chi(a)\overline{\chi(b)}+\chi(b)\overline{\chi(c)}+\chi(a)\overline{\chi(c)}\right)=\begin{cases}
0 & \text{if }a,b,c\text{ are distinct}\\
1 & \text{if exactly two of }a,b,c\text{ are equal}\\
3 & \text{if }a=b=c
\end{cases}.
\]
For $x,y,z\in(\mathbb Z/D \mathbb Z)^{n}$, define the function $T:(\mathbb Z/D \mathbb Z)^{n}\times(\mathbb Z/D \mathbb Z)^{n}\times(\mathbb Z/D \mathbb Z)^{n}\rightarrow\mathbb{C}$
by
\begin{equation}
T(x,y,z)=\prod_{j=1}^{n}\left(\frac{1}{|D|}\sum_{\chi}\left(\chi(a)\overline{\chi(b)}+\chi(b)\overline{\chi(c)}+\chi(a)\overline{\chi(c)}\right)-1\right) ,\label{eq:product_for_T}
\end{equation}
\[= \prod_{j=1}^{n}\left(\frac{1}{|D|}\sum_{\chi}\left(\chi(a)\overline{\chi(b)}1(c)+1(a)\chi(b)\overline{\chi(c)}+\chi(a)1(b)\overline{\chi(c)}\right)-1(a)1(b)1(c)\right)\]

which is non-zero if and only if $x,y,z$ form a $\mathbb Z/D \mathbb Z$-sunflower or all equal.
Let $A\subset(\mathbb Z/D \mathbb Z)^{n}$ be a sunflower free set. Then restricted to $A \times A \times A$, $T$ is nonzero if and only if  $x=y=z$. Hence by
Lemma \ref{lem:Tao-rank-hyperdiagonal-matrices} the slice rank of
$T$ is at least $|A|$. Expanding the product in (\ref{eq:product_for_T}),
we see that $T$ can be written as a linear combination of terms of
the form
\[
\chi_{1}(x_{1})\cdots\chi_{n}(x_{n})\psi_{1}(y_{1})\cdots\psi_{n}(y_{n})\xi_{1}(z_{1})\cdots\xi_{n}(z_{n})
\]
where $\chi_{1},\dots,\chi_{n},\psi_{1},\dots,\psi_{n},\xi_{1},\dots,\xi_{n}$
are characters on $\mathbb{Z}/D\mathbb{Z}$, at most $2n$ of which are nontrivial.
For any such term, at least one of the tuples $\chi_1,\dots,\chi_n$, $\psi_1,\dots,\psi_n$, $\xi_1,\dots,\xi_n$ must contain at most $2n/3$ nontrivial characters. Grouping the terms by the one containing the fewest nontrivial characters like this, we are able
to upper bound the slice rank of $T$ by 
\[
3\sum_{k\leq2n/3}\binom{n}{k}(D-1)^{k},
\]
where the $(D-1)^{k}$ comes from the fact that for each set of $k$ indices, we have $(D-1)^{k}$ possible choices of non-trivial
characters. Because $D \geq 3$, $\frac{D-1}{2} \geq 1$, so:
\[
\sum_{k\leq2n/3}\binom{n}{k}(D-1)^{k} \leq \sum_{k\leq2n/3}\binom{n}{k}(D-1)^{k} \left(\frac{D-1}{2}\right)^{2n/3-k}=   \left(\frac{D-1}{2} \right)^{-\frac{n}{3}} \sum_{k\leq2n/3}\binom{n}{k}(D-1)^{k} \left( \frac{D-1}{2} \right)^{n-k}
\]\[
\leq \left(\frac{D-1}{2} \right)^{-\frac{n}{3}} \sum_{k\leq n}\binom{n}{k}(D-1)^{k} \left( \frac{D-1}{2} \right)^{n-k} = \left(\frac{D-1}{2} \right)^{-\frac{n}{3}}  \left( D-1 + \frac{D-1}{2} \right)^n = \left(\frac{3}{2^{2/3}}(D-1)^{2/3}\right)^{n}
\]
Let $c_{D}=\frac{3}{2^{2/3}}(D-1)^{2/3}.$ This inequality proves that for $|A|$ a sunflower-free set,
\[
|A|\leq 3c_{D}^{n}
\]
 To prove Theorem \ref{thm:Main-Z_D-sunflower-free-capacity} (that $|A| \leq c_D^n$), we can remove the factor of $3$ by a standard amplification argument, as for $A$ a sunflower-free set in $(\mathbb Z/D \mathbb Z)^n$, $A^k$ is a sunflower-free set in $(\mathbb Z/D \mathbb Z)^{nk}$, so $|A| \leq (3c_D^{nk})^{1/k} = 3^{1/k} c_D^n$. Taking $k\rightarrow\infty$, we obtain Theorem \ref{thm:Main-Z_D-sunflower-free-capacity}.

\bibliographystyle{plain}

\end{document}